\newtheorem{theorem}{Theorem}
\newtheorem{definition}{Definition}
\newtheorem{corollary}{Corollary}
\newtheorem{lemma}{Lemma}
\newtheorem{remark}{Remark}
\def\bee{\begin{equation}}
\def\ene{\end{equation}}
\def\beea{\begin{eqnarray}}
\def\enea{\end{eqnarray}}
\def\beeas{\begin{eqnarray*}}
\def\eneas{\end{eqnarray*}}
\newenvironment{proof}{\begin{list}{$\!\!${\bf Proof.}%
  \rule{1pt}{0pt}}{\setlength{\leftmargin}{0pt}%
  \setlength{\itemindent}{30pt}%
  \setlength{\listparindent}{15pt}}\item}{\rule{0.3em}{0mm}%
  \hfill\framebox[1.2ex]{\rule{0.3em}{0mm}}\end{list}}
\def\ignore#1{}
\def\diag{\hbox{diag}}
\def\Diag{\hbox{Diag}}
\def\noise{{\bm{\xi}}}
\def\T{\text{T}}
\def\E{\mathds{E}}
\def\Cov{\text{Cov}}
\def\l{{\,\ell}}
\def\rt{\tilde{r}}
\def\tr{\text{tr}\,}
\def\rk{\text{rk}\,}
\def\R{\mathds{R}}
\def\kmax{{k_{\text{max}}}}
\def\xbar{\bar{x}}
\title { {\bf Expected Value of Matrix Quadratic Forms\\ with Wishart distributed Random Matrices} }
\author{\small Melinda Hagedorn, Heinrich Heine University, D\"usseldorf, Germany}
\date  { Dec. 13, 2022\medskip\\
All data generated or analysed in this article are available in \cite{github}.
}
\begin{document}
\maketitle 

\begin{abstract}
To explore the limits of a stochastic gradient method, it may be useful to consider an
example consisting of  an infinite number of quadratic functions. In this context, it is appropriate to determine the expected value and the covariance matrix of the stochastic noise, i.e. the difference of the true gradient and the approximated gradient generated from a finite sample. When specifying the covariance matrix, the expected value of a quadratic form $QBQ$ is needed, where $Q$ is a Wishart distributed random matrix and $B$ is an arbitrary fixed symmetric matrix. After deriving an expression for $\E(QBQ)$ and considering some special cases, a numerical example is used to show how these results can support the comparison of two stochastic methods.
\end{abstract}

\noindent
{\bf Key words:} Wishart distribution, quadratic form, expected value, second momentum, stochastic gradient method, averaging

\section{Outline}
The Wishart distribution is a generalization of the $\chi^2$ distribution. According to \cite{hardle2015multivariate} and \cite{kanti1979multivariate} the Wishart distribution plays a prominent role in estimating the covariance matrix in context of multivariate statistics. Therefore, it is not surprising that this important distribution is subject of current research. For instance, \cite{mathew1997wishart} considers quadratic forms $Y^\T CY$ with non-negative definite matrix $C$ and normally distributed random matrix $Y$ and investigates what are the necessary and sufficient conditions for $Y^\T CY$ to be Wishart distributed. Based on this, \cite{masaro2003wishart} examines in the special case of $Y$ with expected value zero under which conditions $Y^\T CY$ is central Wishart distributed. Furthermore, in \cite{neudecker1985dispersion} the dispersion matrix of $\text{vec}(Y^\T AY)$ is derived, where $A$ is an arbitrary nonrandom matrix. \\
In this paper we are interested in a different kind of quadratic form: For a Wishart distributed $Q$ and a symmetric matrix $B$ we derive an expression for the expected value of $QBQ$. For $B=I_n$ this is the second momentum of the Wishart distribution and thus part of the examination of the momenta of the Wishart distribution in \cite{bishop2018introduction}.\bigskip\\
In \cite{kollo2005advanced} a different and more general formula for the expected value of $XAX^\T\otimes XBX^\T$
was already derived, where $X\sim N_{n,k}(\mu, \Sigma, \Psi)$ with symmetric positive definite matrices $\Sigma$ and $\Psi$. While the formulation in \cite{kollo2005advanced} is mathematically equivalent to the one derived in this paper for the special case considered here, the actual computation of the expected value is quite different - using a linear system based on Kronecker products in \cite{kollo2005advanced} and a lower dimensional variant below. 
\bigskip\\
This paper is structured as follows. First, in chapter \ref{motivation}, 
we motivate in the context of the stochastic gradient method why
an expression for the expected value $\E(QBQ)$ is needed. In the \ref{introduction} chapter we recall  two important, well-known properties of Wishart distributed random matrices. With this preparation, we are then able to present a theorem with a general expression for $\E(QBQ)$ in the \ref{main} chapter, prove the assertion, and derive more compact expressions under stronger assumptions. Also the connection to the result of \cite{kollo2005advanced} is worked out in more detail. Finally, we show in chapter \ref{example} that the approximated value for increasing sample size approaches the theoretical value from the previous chapter, which illustrates the statement of the theorem, and use the theorem to compare the ordinary stochastic gradient method with a variant that uses averaging.
\bigskip\\
An application of the expected value $\E(QBQ)$ derived here is the minimization
of a random convex quadratic function. While convex quadratic problems in some
form are the simplest nontrivial problems, they are complex enough to reproduce
local dynamics of more difficult smooth problems. They arise in practical applications
in the form of large scale systems of linear equations and least squares problems. Studying the performance of a method on convex quadratic problems is
a fundamental preparation to extend the method to more general problems (see \cite{gonzaga2016steepest} and \cite{goh2017why}).

\section{Notation}
In this paper the all-one and the all-zero vectors and matrices are denoted by
\begin{align*}
 0_n &:= (0,\dots,0)^\T\in\mathds{R}^n,\  0_{n\times n} := 0_n0_n^\T\in\mathds{R}^{n\times n},\\
 \mathds{1}_n &:=(1,\ldots,1)^\T\in\mathds{R}^{n},\ \mathds{1}_{n\times n} := \mathds{1}_n \mathds{1}_n^\T\in\mathds{R}^{n\times n}
\end{align*}
and the identity matrix by $I_n$ with dimension $n\in\mathds{N}$.
The Hadamard product of two matrices $X$ and $Y$ of the same dimension is defined componentwise as $(X\circ Y)_{ij}:=X_{ij}Y_{ij}$. Let $X\otimes Y$ be the Kronecker product of two arbitrary matrices $X$ and $Y$. A matrix $M$ has rank $\rk(M)$, determinant $\text{det}(M)$ and trace $\tr(M):=\sum_{i=1}^n M_{i,i}$. If a matrix $M$ is positive definite, we write $M\succ0$. The vector with the diagonal elements of a quadratic matrix $M$ is denoted by $\diag(M)$ and for a vector $x\in\mathds{R}^n$ the expression $\Diag(x)$ symbolizes the $n\times n$ diagonal matrix with the entries of $x$ on its diagonal. The vector $\text{vec}(M)$ is obtained by stacking the columns of $M$ on top of one another. The inverse function of vec is $\text{mat}:=\text{vec}^{-1}$. Furthermore, $S_+^n$ denotes the set of all symmetric, positive definite matrices, i.\,e. 
\begin{align}
S_+^n:=\{M\in\mathds{R}^{n\times n}\ |\ M=M^\T,\ M\succ 0\}.
\end{align}
The expected value and the covariance matrix of a random vector $X$ are denoted by $\E(X)$ and $\Cov(X)$ whenever they exist.

\section{Motivation}\label{motivation}
\subsection{Stochastic Gradient Method}
In order to find the minimum of a function $f:\ \mathds{R}^n\to\mathds{R},\ f(x):=\tfrac{1}{m}\sum_{i=1}^m f_i(x)$, i.\,e. the root of $\nabla f$, the gradient descent can be used whose iterates $x^{k+1}=x^k-\gamma_k\nabla f(x^k)$ are generated with step length $\gamma_k\geq0$ starting at a point $x^0$. If $m$ is very large, the calculation of the exact gradient $\nabla f(x^k)$ is computationally expensive. To avoid computing the full gradient $\nabla f(x^k)$ at each iteration, it can be approximated. Assuming an i.\,i.\,d. chosen batch $S_k$ from the uniform distribution of $\{1,\ldots,m\}$, the expected value of $\nabla_{S_k}  f(x^k):= \frac 1{|S_k|} \sum_{i\in S_k}\nabla f_i(x^k)$ is 
\begin{align*}
 E(\nabla_{S_k}  f(x^k)) &= E\left(\frac 1{|S_k|} \sum_{i\in S_k}\nabla f_i(x^k)\right) 
= \frac 1{|S_k|} \sum_{i\in S_k}E(\nabla f_i(x^k))\\ 
&= \frac 1{|S_k|} \sum_{i\in S_k}\sum_{j=1}^m\nabla f_j(x^k)\cdot\text{P}(\nabla f_j(x^k)=\nabla f_i(x^k))
= \frac 1{|S_k|} \sum_{i\in S_k}\frac{1}{m}\sum_{j=1}^m\nabla f_j(x^k)\\
&= \frac 1{|S_k|}\cdot |S_k|\cdot \nabla f(x^k) 
= \nabla f(x^k).
\end{align*}
This is the motivation for using the approximation $\nabla_{S_k} f(x^k)$ instead of $\nabla f(x^k)$, i.\,e. the stochastic gradient (descent) method (SGD) is given by the iterates
\begin{align} \label{proc1}
x^{k+1} = x^k-\gamma_k\nabla_{S_k}f(x^k).
\end{align}
There exist numerous modifications of the stochastic gradient method. The following example can be useful to examine the limits of a SGD method or to compare two variants of SGD.
\subsection{Random quadratic functions}
As in \cite{averaging}, we assume a fixed matrix $A\in\mathds{R}^{n\times n}$ with $\det(A)\neq 0$ and $n\in\mathds{N}$. At each iteration $\ell\in\{1,\dots,m\}$ we draw random vectors $r^\l$ and $b^\l$ independently from the $n$-variate normal distribution with expected value $0_n$ and covariance matrix $\Sigma=\Sigma^\T\succ 0$. Briefly, this can be written as $r^\l,b^\l\sim N_n(0_n,\Sigma)$. With $a^\l:=Ar^\l$ we are able to define the functions
\begin{align}\label{fl}
 f_\ell:\ \mathds{R}^n\to\mathds{R},\ f_\ell(x):=\tfrac12((a^\l)^\T x)^2 + (b^\l)^\T x.
\end{align}
Since $a^\l$ has the expected value $\E(a^\l)=\E(Ar^\l)=A\E(r^\l)=0_n$ and the covariance matrix $\Cov(a^\l)=\Cov(Ar^\l)=A\Cov(r^\l)A^\T=A\Sigma A^\T$, it holds $a^\l\sim N_n(0_n,A\Sigma A^\T)$
 and the second momentum is given by $\E(a^\l(a^\l)^\T)=\Cov(a^\l)+\E(a^\l)\E(a^\l)^\T=\Cov(a^\l)=A\Sigma A^\T$. 
Because $f_\ell$ are quadratic functions of $r^\l$ and $b^\l$ with expected value
\begin{align*}
 \E(f_\ell(x)) =\tfrac12\E(x^\T a^\l (a^\l)^\T x)+x^\T\E(b^\l) 
 = \tfrac12 x^\T\E(a^\l(a^\l)^\T)x
 = \tfrac12x^TA\Sigma A^\T x =: f(x)
\end{align*}
and due to the existence of the fourth momenta of $r^\l$ and $b^\l$, for a given $x$ the variances of $f_\ell(x)$ are bounded and almost surely it exists
\begin{align}\label{objective}
 \lim_{m\to\infty}\sum_{\ell=1}^m f_\ell(x)=f(x).
\end{align}
The stochastic gradient method uses the approximation $\nabla f_\ell(x)$ instead of the full gradient $\nabla f(x)$. Therefore it is reasonable to examine the noise $\noise^\l$ defined by
\begin{align}\label{noise}
 \noise^\l := \nabla f_\ell(x)-\nabla f(x) = a^\l(a^\l)^\T x + b^\l - A\Sigma A^\T x
\end{align}
with expected value
\begin{align*}
 \E(\noise^\l) = \E(a^\l(a^\l)^\T)x+\E(b^\l)-A\Sigma A^\T x
 = A\Sigma A^\T x +0_n-A\Sigma A^\T x = 0_n.
\end{align*}
Using the independence of $a^\l$ and $b^\l$ and defining $B:=A^\T xx^\T A$, the covariance matrix of $\noise^\l$ can be written as 
\begin{align}
 \Cov(\noise^\l)
 &= \E(\noise^\l(\noise^\l)^\T)-\E(\noise^\l)\E(\noise^\l)^\T
 = \E(\noise^\l(\noise^\l)^\T)\nonumber\\
 &= \E((a^\l(a^\l)^\T x + b^\l - A\Sigma A^\T x)(a^\l(a^\l)^\T x + b^\l - A\Sigma A^\T x)^\T)\nonumber\\
 &= \E(a^\l(a^\l)^\T xx^\T a^\l(a^\l)^\T) + \E(a^\l(a^\l)^\T x(b^\l)^\T) - \E(a^\l(a^\l)^\T xx^\T A\Sigma A^\T)\nonumber\\
 &\ \ \ \, + \E(b^\l x^\T a^\l(a^\l)^\T) +\E(b^\l(b^\l)^\T) - \E(b^\l x^\T A\Sigma A^\T)\nonumber\\
 &\ \ \ \, - \E(A\Sigma A^\T xx^\T a^\l(a^\l)^\T) - \E(A\Sigma A^\T x(b^\l)^\T) + \E(A\Sigma A^\T xx^\T A\Sigma A^\T)\nonumber\\
  &= \E(Ar^\l(r^\l)^\T A^\T xx^\T A r^\l(r^\l)^\T A^\T) + \E(a^\l(a^\l)^\T x)\E(b^\l)^\T - \E(a^\l(a^\l)^\T) xx^\T A\Sigma A^\T\nonumber\\
 &\ \ \ \, + \E(b^\l)\E( x^\T a^\l(a^\l)^\T) +\left[ \Cov(b^\l)+\E(b^\l)\E(b^\l)^\T \right] - \E(b^\l) x^\T A\Sigma A^\T\nonumber\\
 &\ \ \ \, - A\Sigma A^\T xx^\T \E(a^\l(a^\l)^\T) - A\Sigma A^\T x\E(b^\l)^\T + A\Sigma A^\T xx^\T A\Sigma A^\T\nonumber\\
 &= A\E(r^\l(r^\l)^\T B r^\l(r^\l)^\T) A^\T + 0_{n\times n} - A\Sigma A^\T xx^\T A\Sigma A^\T + 0_{n\times n} + \Sigma - 0_{n\times n}\nonumber\\
 &\ \ \ \,- A\Sigma A^\T xx^\T A\Sigma A^\T - 0_{n\times n} + A\Sigma A^\T xx^\T A\Sigma A^\T \nonumber\\
 &= A\E(r^\l(r^\l)^\T B r^\l(r^\l)^\T) A^\T + \Sigma - A\Sigma B\Sigma A^\T.\label{covxi}
\end{align}
This motivates us to determine expected values of the form $\E(r^\l(r^\l)^\T B r^\l(r^\l)^\T)$ with random vectors $r^\l\sim N_n(0_n,\Sigma)$ and a symmetric matrix $B$.

\section{Introduction}\label{introduction}
In the context of multivariate statistics, the following definition is of great importance:
\begin{definition}\label{defwishart}
 For $k\in\mathds{N}$ independent and identically distributed (i.\,i.\,d.) random vectors $r^\l\sim N_n(0_n,\Sigma)$ with covariance matrix $\Sigma=\Sigma^\T\succ0$ the random matrix $Q:=\sum_{\ell=1}^k r^\l(r^\l)^\T$
is called $n$-variate Wishart distributed with scale matrix $\Sigma$ and $k$ degrees of freedom. We write $Q\sim W_n(\Sigma, k)$. 
\end{definition}
\begin{remark}
 Alternatively $Q\sim W_n(\Sigma,k)$ can be defined by $Q:=RR^\T$, where $R$ is a random $n\times k$ matrix, which columns $r^\l$ are independent and identically $N_n(0_n,\Sigma)$ distributed. This coincides exactly with Definition \ref{defwishart}.
\end{remark}
The following result is well known:
\begin{lemma}\label{expwishart}
 The expected value of $Q\sim W_n(\Sigma,k)$ is 
 \begin{align*}
 \E(Q) = k\Sigma.
 \end{align*}
\end{lemma}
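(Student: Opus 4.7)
The plan is to apply Definition~\ref{defwishart} directly together with linearity of expectation. By the definition, we may write
\begin{align*}
Q = \sum_{\ell=1}^k r^\ell (r^\ell)^\T,
\end{align*}
where the $r^\ell$ are i.i.d.\ $N_n(0_n,\Sigma)$. Taking expectations term by term reduces the problem to computing the single-summand second moment $\E(r^\ell (r^\ell)^\T)$, which is the same for every $\ell$.

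For the second moment I would use the identity $\E(XX^\T) = \Cov(X) + \E(X)\E(X)^\T$, which holds for any random vector $X$ with finite second moments. Applied to $X = r^\ell$, the mean-zero assumption makes the outer-product term vanish, leaving $\E(r^\ell (r^\ell)^\T) = \Sigma$. Summing $k$ identical contributions then yields $\E(Q) = k\Sigma$, as claimed.

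There is no substantive obstacle here; the statement is essentially a direct consequence of the definition and the standard mean/covariance decomposition, and the argument is a warm-up for the more involved fourth-moment manipulations needed later when $\E(QBQ)$ is attacked in the main theorem.
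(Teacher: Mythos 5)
Your argument is correct: writing $Q=\sum_{\ell=1}^k r^\ell(r^\ell)^\T$ per Definition~\ref{defwishart}, using linearity of expectation, and computing $\E(r^\ell(r^\ell)^\T)=\Cov(r^\ell)+\E(r^\ell)\E(r^\ell)^\T=\Sigma$ from the mean-zero assumption gives $\E(Q)=k\Sigma$. The paper itself states this lemma as well known and supplies no proof, so there is nothing to contrast with; your derivation is the standard one and is complete.
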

In the proof of the main result, the following important property of Wishart distributed random matrices will also be needed. For the sake of completeness it is proved here.
\begin{lemma}\label{CTQC}
 Consider $Q\sim W_n(\Sigma,k)$ with $\Sigma\in S_+^n$, $C\in\mathds{R}^{n\times m}$ and $\rk(C)=m\in\mathds{N}$. Then
 \begin{align*}
  C^\T QC \sim W_m(C^\T \Sigma C, k).
 \end{align*}
\end{lemma}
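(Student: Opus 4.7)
The plan is to reduce the statement directly to Definition~\ref{defwishart} by rewriting $C^\T Q C$ as a sum of outer products of i.i.d.\ multivariate normals with the claimed covariance. First I would use the representation $Q = \sum_{\ell=1}^k r^\l (r^\l)^\T$ with $r^\l \sim N_n(0_n,\Sigma)$ i.i.d., which gives
\begin{align*}
 C^\T Q C \;=\; \sum_{\ell=1}^k C^\T r^\l (r^\l)^\T C \;=\; \sum_{\ell=1}^k s^\l (s^\l)^\T,
\end{align*}
where $s^\l := C^\T r^\l \in \mathds{R}^m$.

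Next I would identify the distribution of the $s^\l$. Since each $s^\l$ is a fixed linear transformation of a multivariate normal, standard properties of the normal distribution give $s^\l \sim N_m(C^\T 0_n,\, C^\T \Sigma C) = N_m(0_m, C^\T \Sigma C)$; moreover, the $s^\l$ inherit independence from the $r^\l$ because they are functions of disjoint independent inputs. So I have $k$ i.i.d.\ centered $m$-variate normals with common covariance $C^\T \Sigma C$.

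To close the argument via Definition~\ref{defwishart}, I need $C^\T \Sigma C \in S_+^m$. Symmetry is immediate. For positive definiteness, I would take an arbitrary $x \in \mathds{R}^m \setminus \{0\}$ and write $x^\T (C^\T \Sigma C) x = (Cx)^\T \Sigma (Cx)$; since $\rk(C) = m$ means $C$ has full column rank, $Cx \neq 0$, and then $\Sigma \succ 0$ yields $(Cx)^\T \Sigma (Cx) > 0$. This is the only place where the rank hypothesis on $C$ is used, and it is the mildly delicate step—without full column rank, $C^\T \Sigma C$ would merely be positive semidefinite, falling outside the $S_+^m$ requirement in Definition~\ref{defwishart}.

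With these ingredients assembled, applying Definition~\ref{defwishart} to $\sum_{\ell=1}^k s^\l (s^\l)^\T$ immediately gives $C^\T Q C \sim W_m(C^\T \Sigma C, k)$, completing the proof. I do not anticipate any real obstacle beyond the positive-definiteness check described above.
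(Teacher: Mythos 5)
Your proof is correct and follows essentially the same route as the paper: represent $Q=\sum_{\ell=1}^k r^\l(r^\l)^\T$, observe that $C^\T r^\l\sim N_m(0_m,C^\T\Sigma C)$ with $C^\T\Sigma C$ positive definite thanks to the full column rank of $C$, and conclude from Definition~\ref{defwishart}. Your write-up is slightly more explicit about the positive-definiteness check, but the argument is the same.
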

\begin{proof}
 Since we assume $Q\sim W_n(\Sigma,k)$, there exist $k$ independent and identically distributed $r^\l\sim N_n(0_n,\Sigma)$ so that $Q=\sum_{\ell=1}^k r^\l(r^\l)^\T$. Consider $C^\T r^\l$ with the expected value $\E(C^\T r^\l)=C^\T \E(r^\l)=0_m$ and positive definite $\Cov(C^\T r^\l)=C^\T \Cov(r^\l) C = C^\T\Sigma C$ due to the full rank of $C$, thus $C^\T r^\l\sim N_m(0_m,C^\T\Sigma C)$. Then we are able to conclude that
 $C^\T QC = C^\T\left( \sum_{\ell=1}^k r^\l(r^\l)^\T \right)C = \sum_{\ell=1}^k (C^\T r^\l)(C^\T r^\l)^\T\sim W_m(C^\T\Sigma C,k)$.
\end{proof}

\section{Determination of $\E(QBQ)$}\label{main}
The following theorem is the main result of this paper:
\begin{theorem}\label{expqbq}
 For $n\in\mathds{N}$ we consider the $n$-variate Wishart distributed random matrix $Q$ with the symmetric, positive definite scale matrix $\Sigma$ and $k\in\mathds{N}$ degrees of freedom, i.\,e. $Q\sim W_n(\Sigma,k)$. Furthermore, we deal with 
 a fixed matrix $B=B^\T\in\mathds{R}^{n\times n}$. The expected value of the quadratic form $QBQ$ is
 \begin{align*}
\E(QBQ) 
= k\cdot\tr(B\Sigma)\Sigma + (k^2+k)\Sigma B\Sigma.
 \end{align*}
\end{theorem}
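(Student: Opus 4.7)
The plan is to start from the defining representation $Q = \sum_{\ell=1}^{k} r^{\ell}(r^{\ell})^\T$ with i.i.d.\ $r^{\ell}\sim N_n(0_n,\Sigma)$ and expand the quadratic form as a double sum
\[
 QBQ = \sum_{\ell=1}^{k}\sum_{m=1}^{k} r^{\ell}(r^{\ell})^\T B\, r^{m}(r^{m})^\T .
\]
Taking expectations and splitting the double sum into the diagonal part $\ell=m$ (there are $k$ such terms) and the off-diagonal part $\ell\neq m$ (there are $k(k-1)$ terms) is the first step. For $\ell\neq m$ the independence of $r^{\ell}$ and $r^{m}$ together with Lemma~\ref{expwishart} applied in the special case $k=1$, i.e.\ $\E(r^{\ell}(r^{\ell})^\T)=\Sigma$, immediately gives the contribution $k(k-1)\,\Sigma B\Sigma$.

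The main work then is to evaluate the single-vector expression
$\E\bigl(r\,r^\T B\,r\,r^\T\bigr)$ for one Gaussian vector $r\sim N_n(0_n,\Sigma)$. Here I would use Isserlis' (Wick's) theorem for centred normal vectors, which gives
$\E(r_i r_j r_p r_q)=\Sigma_{ij}\Sigma_{pq}+\Sigma_{ip}\Sigma_{jq}+\Sigma_{iq}\Sigma_{jp}$, and read off the $(i,j)$-entry of the matrix in question as
\[
 \bigl[\E(r\,r^\T B\,r\,r^\T)\bigr]_{ij}
 =\sum_{p,q}B_{pq}\,\E(r_p r_q r_i r_j)
 =\Sigma_{ij}\tr(B\Sigma) + (\Sigma B\Sigma)_{ij} + (\Sigma B^\T\Sigma)_{ij}.
\]
Using the assumption $B=B^\T$ the last two terms coincide, yielding the compact matrix identity $\E(r\,r^\T B\,r\,r^\T)=\tr(B\Sigma)\,\Sigma+2\,\Sigma B\Sigma$. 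Combining this with the diagonal counting factor $k$ and the off-diagonal contribution gives
\[
 \E(QBQ) = k\bigl[\tr(B\Sigma)\Sigma + 2\Sigma B\Sigma\bigr] + k(k-1)\Sigma B\Sigma
         = k\tr(B\Sigma)\Sigma + (k^2+k)\Sigma B\Sigma,
\]
which is the claim.

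If one wishes to avoid quoting Isserlis' theorem, an alternative I would keep in reserve is to first reduce to $\Sigma=I_n$ via the substitution $r=\Sigma^{1/2}z$ with $z\sim N_n(0_n,I_n)$ (so that $Q=\Sigma^{1/2}Q'\Sigma^{1/2}$ with $Q'\sim W_n(I_n,k)$, a reduction that is in the spirit of Lemma~\ref{CTQC}), and then compute the fourth moment of an isotropic Gaussian component-wise, where the only nonzero Wick contractions involve Kronecker deltas. The result is then transported back to general $\Sigma$ with $\widetilde B=\Sigma^{1/2}B\Sigma^{1/2}$ and the identity $\tr(\widetilde B)=\tr(B\Sigma)$. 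The main obstacle in either route is the bookkeeping of the three Wick pairings and, crucially, using the symmetry of $B$ at the right moment to merge two of them into the factor $2\Sigma B\Sigma$ that produces the coefficient $k^2+k$; without $B=B^\T$ one would instead obtain $\Sigma B\Sigma+\Sigma B^\T\Sigma$ and the compact form of the theorem would fail.
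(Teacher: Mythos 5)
Your proof is correct. It shares the paper's overall skeleton --- write $Q=\sum_{\ell}r^{\ell}(r^{\ell})^\T$, split $\E(QBQ)$ into the $k$ diagonal terms and the $k(k-1)$ off-diagonal terms, and handle the latter by independence and $\E(r^{\ell}(r^{\ell})^\T)=\Sigma$ --- but the core computation of the single-vector moment $\E(r r^\T B r r^\T)$ is carried out differently. The paper first diagonalizes $\Sigma=UDU^\T$, passes to $\tilde r=U^\T r\sim N_n(0_n,D)$ whose components are independent, and then needs only the elementary univariate moments $\E(\tilde r_i^2)=D_{i,i}$ and $\E(\tilde r_i^4)=3D_{i,i}^2$ together with a case analysis on the index pattern, before conjugating the answer back by $U$. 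You instead invoke Isserlis'/Wick's theorem for a general centred Gaussian, $\E(r_ir_jr_pr_q)=\Sigma_{ij}\Sigma_{pq}+\Sigma_{ip}\Sigma_{jq}+\Sigma_{iq}\Sigma_{jp}$, which delivers $\E(rr^\T Brr^\T)=\tr(B\Sigma)\Sigma+\Sigma B\Sigma+\Sigma B^\T\Sigma$ in one line. Your route is shorter and avoids the eigendecomposition entirely, at the price of quoting a stronger external result; the paper's route is self-contained (it in effect re-derives the diagonal-covariance case of Isserlis by hand) but pays for it with the unitary-transformation bookkeeping. Your remark that the symmetry of $B$ is precisely what merges $\Sigma B\Sigma+\Sigma B^\T\Sigma$ into $2\Sigma B\Sigma$ --- and that without it the compact statement would fail --- makes explicit something the paper's casework obscures, and your fallback reduction via $r=\Sigma^{1/2}z$ is essentially the paper's reduction with the symmetric square root in place of the orthogonal diagonalization.
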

\begin{proof}
 Let the matrix $\Sigma=\Sigma^\T\succ0$ be factorized as $\Sigma=UDU^\T$ with diagonal matrix $D$ and orthogonal matrix $U$, i.\,e. $UU^\T=I_n$. Since $Q\sim W_n(\Sigma,k)$, there exist i.\,i.\,d. random vectors $r^\l\sim N_n(0_n,\Sigma)$ so that $Q=\sum_{\ell=1}^k r^\l(r^\l)^\T=\sum_{\ell=1}^k Q_\ell$ with $Q_\ell := r^\l(r^\l)^\T\sim W_n(\Sigma,1)$. We note that Lemma \ref{expwishart}
 gives us $\E(Q_\ell)=1\cdot\Sigma = \Sigma$.
 Now we define the unitary transformations 
 \begin{align*}
 \tilde{B}:=U^\T BU,\ \ \rt^\l:=U^\T r^\l\ \text{ and }\ \tilde{Q}_\ell:=U^\T Q_\ell U.
 \end{align*}
 Since $\rt^\l\sim N_n(0_n,D)$, its components $\rt_i^\l\sim N(0,D_{i,i})$ have the momenta $\E(\rt_i^\l)=0$, $\E((\rt_i^\l)^2)=D_{i,i}$ and $\E((\rt_i^\l)^4)=3 D_{i,i}^2$, and due to Lemma \ref{CTQC} we find
 \begin{align*}
  \tilde{Q}_\ell = U^\T Q_\ell U =U^\T r^\l(r^\l)^\T U = U^\T r^\l(U^\T r^\l)^\T = \rt^\l(\rt^\l)^\T\sim W_n(D,1).
 \end{align*}
 The expected value of $\tilde{Q}_\ell\tilde{B}\tilde{Q}_\ell$ is componentwise given by
 \begin{align*}
  \E(\tilde{Q}_\ell\tilde{B}\tilde{Q}_\ell)_{i,j} 
  &= \E(e_i^\T \rt^\l(\rt^\l)^\T\tilde{B}\rt^\l(\rt^\l)^\T e_j)
  = \E(\rt_i^\l\rt_j^\l (\rt^\l)^\T\tilde{B}\rt^\l)
  = \E\left( \rt_i^\l\rt_j^\l \sum_{p,q=1}^n \tilde{B}_{p,q} \rt_p^\l \rt_q^\l \right)\\
  &= \sum_{p,q=1}^n \tilde{B}_{p,q} \E(\rt_i^\l \rt_j^\l \rt_p^\l \rt_q^\l)\\
  &= \sum_{p,q=1}^n \tilde{B}_{p,q}\left\{  
  \begin{tabular}{ll}
  $3D_{i,i}^2$ &if  $i=j=p=q$\\
  $D_{i,i}D_{p,p}$ &if $i=j\neq p=q$\\ 
  $D_{i,i}D_{j,j}$ &if $i\neq j\ \text{and } ((i=p,j=q)\ \text{or }(i=q,j=p))$\\
  0 &else
  \end{tabular}
  \right.\\
  &= \left\{  
  \begin{tabular}{ll}
  $3D_{i,i}^2\tilde{B}_{i,i} + D_{i,i}\sum_{p=1, p\neq i}^n\tilde{B}_{p,p}D_{p,p}$ &if $i=j$\\
  $2D_{i,i}D_{j,j}\tilde{B}_{i,j}$ &if $i\neq j$
  \end{tabular}
  \right.\\
  &= \left\{  
  \begin{tabular}{ll}
  $2D_{i,i}^2\tilde{B}_{i,i} + D_{i,i}\tr(\tilde{B}D)$ &if $i=j$\\
  $2D_{i,i}D_{j,j}\tilde{B}_{i,j}$ &if $i\neq j$
  \end{tabular}
  \right.\\
  &= \left[ 2(\diag(D)\diag(D)^\T)\circ\tilde{B} +\tr(\tilde{B}D)D \right]_{i,j}.
 \end{align*}
All in all, we obtain
\begin{align*}
 \E(QBQ) &= \E\left( \left(\sum_{\ell=1}^k Q_\ell\right) B\left(\sum_{h=1}^k Q_h \right)\right) 
 = \sum_{\ell,h=1}^k\E(Q_\ell BQ_h)\\
 &= \sum_{\ell=1}^k\E(Q_\ell BQ_\ell) + \sum_{\ell,h=1,\ell\neq h}^k\E(Q_\ell BQ_h)\\
 &= \sum_{\ell=1}^k\E(UU^\T Q_\ell UU^\T BUU^\T Q_\ell UU^\T ) + \sum_{\ell,h=1,\ell\neq h}^k\E(Q_\ell) B\E(Q_h)\\
 &= \sum_{\ell=1}^k U\E(\tilde{Q}_\ell \tilde{B} \tilde{Q}_\ell)U^\T + \sum_{\ell,h=1,\ell\neq h}^k \Sigma B\Sigma\\
 &= kU\left[ 2(\diag(D)\diag(D)^\T)\circ\tilde{B} +\tr(\tilde{B}D)D \right] U^T + (k^2-k)\Sigma B\Sigma.
\end{align*}
With $\tilde{B}=U^\T BU$ as defined above one gets
\begin{align*}
 \E(QBQ) &= 2kU(D\tilde{B}D)U^\T +k\cdot\tr(\tilde{B}D)UDU^\T + (k^2-k)\Sigma B\Sigma\\
 &= 2k (UDU^\T) B(U DU^\T) + k\cdot\tr(U^\T BUD)\Sigma + (k^2-k)\Sigma B\Sigma\\
 &= 2k\Sigma B\Sigma + k\cdot\tr(BUDU^\T)\Sigma+(k^2-k)\Sigma B\Sigma
 = k\cdot\tr(B\Sigma)\Sigma+(k^2+k)\Sigma B\Sigma,
\end{align*}
which corresponds exactly to the assertion.
\end{proof}
\bigskip
In practice, the following special cases might be of interest. Additionally to the assumptions of Theorem \ref{expqbq} we demand $k=1$. Then the result of the theorem simplifies to
\begin{align}\label{k1}
 \E(QBQ) =  \tr(B\Sigma)\Sigma+2\Sigma B\Sigma
\end{align}
If we assume in addition that 
$\Sigma=\sigma^2I_n$, we get
\begin{align}
 \E(QBQ) 
 = \sigma^4  \left[2B+\tr(B)I_n\right].
\end{align}
Finally, the assumption $\Sigma=I_n$ leads to
the special case analyzed in paper \cite{averaging}. With $B=I_n$ it follows immediately from Theorem \ref{expqbq}:
\begin{corollary}
 The second momentum of a $W_n(\Sigma,k)$ distributed random matrix $Q$ with scale matrix $\Sigma\in S_+^n$ is given by
  \begin{align*}
  \E(Q^2) = (k^2+k) \Sigma^2 + \tr(\Sigma)k\Sigma.
 \end{align*}
\end{corollary}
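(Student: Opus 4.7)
The plan is to obtain the corollary as a direct specialization of Theorem \ref{expqbq}, since every hypothesis of that theorem is already in force and the only additional choice needed is of the fixed matrix $B$. Specifically, I would set $B := I_n$, which is trivially symmetric and lies in $\mathds{R}^{n\times n}$, so the hypotheses of Theorem \ref{expqbq} remain satisfied.

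With this choice I would simplify each of the three factors appearing on the right-hand side of Theorem \ref{expqbq}. Namely, $QBQ = Q I_n Q = Q^2$, so the left-hand side matches the quantity in the corollary; $\tr(B\Sigma) = \tr(I_n\Sigma) = \tr(\Sigma)$, which turns the first summand into $k\cdot\tr(\Sigma)\,\Sigma$; and $\Sigma B\Sigma = \Sigma I_n \Sigma = \Sigma^2$, which turns the second summand into $(k^2+k)\,\Sigma^2$. Adding the two contributions yields exactly the claimed expression
\begin{align*}
\E(Q^2) = (k^2+k)\Sigma^2 + \tr(\Sigma)\,k\,\Sigma.
\end{align*}

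There is essentially no obstacle to this argument; the only point that warrants a brief check is that substituting $B = I_n$ is legitimate under the theorem's assumption $B = B^\T \in \mathds{R}^{n\times n}$, which is immediate. Thus the proof of the corollary reduces to a single line of substitution, and no further computation beyond reordering the two terms is required.
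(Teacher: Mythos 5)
Your proposal is correct and matches the paper's approach exactly: the paper also obtains this corollary by substituting $B=I_n$ into Theorem \ref{expqbq}, whereupon $\tr(B\Sigma)=\tr(\Sigma)$ and $\Sigma B\Sigma=\Sigma^2$ give the stated formula immediately.
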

Now we are able to present an expression for the covariance matrix of the noise \eqref{covxi} in which the random variables $r^\l$ have been eliminated. Since $r^\l\sim N_n(0_n,\Sigma)$ we can define the random matrix $Q^\l:=r^\l(r^\l)^\T\sim W_n(\Sigma,1)$ and using \eqref{k1} we conclude
\begin{align*}
 \Cov(\noise^\l) &= A\E(r^\l(r^\l)^\T B r^\l(r^\l)^\T) A^\T + \Sigma - A\Sigma B\Sigma A^\T
 =A\E(Q^\l B Q^\l) A^\T + \Sigma - A\Sigma B\Sigma A^\T\\
 &= A U\left[ 2\left(\diag(D)(\diag(D))^\T\,\right)\circ (U^\T BU)+\tr(U^\T BU D)D \right] U^\T A^\T\\
 &\ \ \ \ + \Sigma - A\Sigma B\Sigma A^\T,
\end{align*}
where $D$ and $U$ satisfy $\Sigma=UDU^\T$ and $B=A^\T xx^\T A$.\\
In case of $\Sigma = I_n$ this expression can be simplified to
\begin{align*}
 \Cov(\noise^\l) &= A \left[ 2 B+\tr( B )I_n \right] A^\T + I_n - ABA^\T
 = 2ABA^\T +\tr(B)AA^\T + I_n - ABA^\T\\
 &= AA^\T xx^\T AA^\T + \tr(A^\T xx^\T A)AA^\T + I_n
 = AA^\T xx^\T AA^\T + \| A^\T x\|_2^2 AA^\T + I_n.
\end{align*}
As mentioned before, an alternative expression for $\E(QBQ)$ can be derived. If one chooses $A=I_k$, $B=I_k$, $\Psi=I_k$ and $\mu=0_{k\cdot n}$ in Theorem 2.2.9 (ii) from \cite{kollo2005advanced}, one obtains for $Q:=XX^\T$
\begin{align}
 \E(Q\otimes Q) = k^2\cdot \Sigma\otimes\Sigma + k\cdot\text{vec}(\Sigma)\text{vec}(\Sigma)^\T + k\cdot K_{n,n}\cdot \Sigma\otimes\Sigma,
\end{align}
where $K_{n,n}$ is the commutation matrix consisting of $n\times n$ blocks with $n\times n$ entries each. In the ($i,j$)-th block the only non-zero element is a ``1'' in position ($j,i$). It is a permutation matrix that can be used to describe the relationship between the vectorized forms of a square matrix $A$ and its transpose, since $\text{vec}(A^\T)=K_{n,n}\text{vec}(A)$. Using the calculation rule $(A\otimes B)\text{vec}(V)=\text{vec}(BVA^\T)$ for Kronecker products we get
\begin{align*}
 \E(QBQ)&=\text{mat}\left( \E(\text{vec}(QBQ)) \right)
 = \text{mat}(\E((Q\otimes Q)\text{vec}(B)))
 = \text{mat}(\E(Q\otimes Q)\text{vec}(B))\\
 &= \text{mat}((k^2\cdot \Sigma\otimes\Sigma + k\cdot\text{vec}(\Sigma)\text{vec}(\Sigma)^\T + k\cdot K_{n,n}\cdot \Sigma\otimes\Sigma)\text{vec}(B))\\
 &= \text{mat}(k^2\cdot\text{vec}( \Sigma B\Sigma) + k\cdot\text{vec}(\Sigma)\text{vec}(\Sigma)^\T\text{vec}(B) + k\cdot K_{n,n}\cdot \text{vec}(\Sigma B\Sigma))\\
 &= \text{mat}(\text{vec}(k^2 \Sigma B\Sigma) + k\cdot\text{vec}(\Sigma)\text{sum}(\Sigma\circ B) +  \text{vec}(k\Sigma B\Sigma))\\
 &= k^2 \Sigma B\Sigma + k\Sigma\cdot\text{sum}(\Sigma\circ B) + k\Sigma B\Sigma
 = k\cdot\tr(B\Sigma)\Sigma + (k^2+k)\Sigma B\Sigma,
\end{align*}
where $\text{sum}(M)$ denotes the sum over all entries of a matrix $M$. Thus, we get the same expression as in Theorem \ref{expqbq}.

\section{Numerical examples}\label{example}
\subsection{Illustrative example}
For dimension $n=10$ we generate randomly the matrices $B$ and $\Sigma$ with i.\,i.\,d. standard normally distributed entries and ensure, that $B$ is symmetric and $\Sigma\in S_+^n$. Let $k=3$ and $Q\sim W_n(\Sigma,k)$. The aim is to determine $\E(QBQ)$.
With the diagonal matrix $D$ and the orthogonal matrix $U$ from an eigendecomposition $\Sigma=UDU^\T$ and with Theorem \ref{expqbq} we get on the one hand
\begin{align*}
 E_{\text{exact}} := kU\left[ 2\left(\diag(D)(\diag(D))^\T\,\right)\circ (U^\T BU)+\tr(U^ \T BUD)D \right] U^\T+(k^2-k)\Sigma B\Sigma.
\end{align*}
On the other Hand, the expected value can be approximated with $m$ realizations $Q^i$ as
\begin{align}\label{eapprox}
 E_{\text{empiric}} := \frac1m\sum_{i=1}^m Q^i B Q^i,
\end{align}
because the law of large numbers provides
\begin{align*}
 \lim_{m\to\infty} \frac1m\sum_{i=1}^m Q^i B Q^i = \E(QBQ)\ \ \text{almost surely}.
\end{align*}
To get an impression of how fast $E_{\text{empiric}}$ approaches the theoretical value $E_{\text{exact}}$ for increasing sample size $m$, we plot the relative error $\|E_{\text{exact}}-E_{\text{empiric}}\|_2/\|E_{\text{exact}}\|_2$ against $m$.
Due to the randomness during the generation of $B$ and $\Sigma$ and in the realizations of $Q$, ten independent runs are made. At each run the relative error is calculated for $m\in\{1,10,100,10^3,10^4,10^5,10^6\}$. Thus, to be more precise, the logarithmic plot in figure \ref{fig:expvgl} shows the arithmetic means of the relative errors in dependence of $m$. 
\newpage
The standard deviation is represented by error bars:\\
\begin{figure}[h]
	\centering	\captionsetup{justification=centering,margin=2cm}
		\includegraphics[width=14cm]{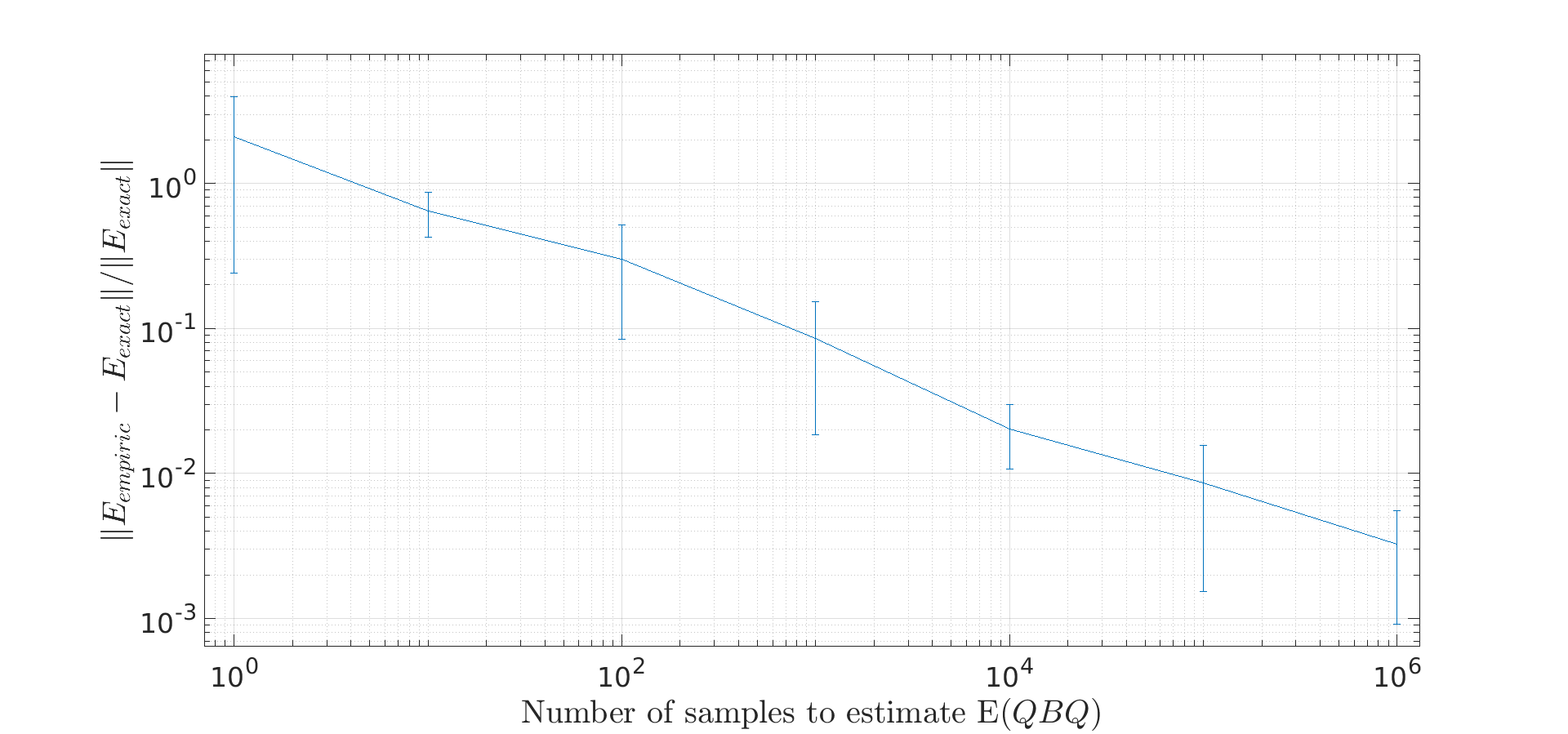}
    \caption{Relative error $\|E_{\text{exact}}-E_{\text{empiric}}\|_2/\|E_{\text{exact}}\|_2$ with respect\\ to the number of samples $m$ used for the approximation $E_{\text{empiric}}$. }
	\label{fig:expvgl}
\end{figure}\\
For $m=1$ the mean distance between $E_{\text{exact}}$ and $E_{\text{empiric}}$ is about $4\cdot 10^4$ which leads to a relative error of 2. Using $10^6$ samples this distance reduces to approximately 90 and the relative error to $3\cdot 10^{-3}$. The curve in the logarithmic plot is roughly linear decreasing. Two interesting observations arise: For large $m$ the approximation $E_{\text{empiric}}$ tends to the result of Theorem \ref{expqbq} and in order to approximate $\E(QBQ)$ adequately by \eqref{eapprox} many samples and a lot of time is needed. Thus, the main result is not only theoretically fascinating but also of practical relevance.

\subsection{Comparison of two SGD methods}
Our actual goal, as mentioned before, is to compare two algorithms that approximate solutions for the problem $\min_x f(x)$ with $f(x)=\lim_{m\to\infty}\sum_{\ell=1}^m f_\ell(x)$ from \eqref{objective}. For $n=10$ dimensions we randomly generate the entries of the matrices $\Sigma$ and $A$ i.\,i.\,d. from the $N(0,1)$ distribution and ensure that $\Sigma=\Sigma^\T$ is positive definite and that $A=A^T$ is positive semidefinite with norm $\|A\|_2=1$ and condition number $\text{cond}(A)=5$.\\
As initial value $x^0\in\R^n$ we choose the entries randomly from $N(0,1)$ and normalize the vector. Set the number of iterations to $\kmax=10^7$ and let the step length be given by $\gamma\equiv\gamma_k=10^{-3}$. Let $\{x^1,\dots,x^\kmax\}$ be the iterates generated by the SGD method \eqref{proc1}. A variation of the SGD method described above is the averaged SGD as analyzed in \cite{polyak}. Starting with $\xbar^{\,0}:=x^{\,0}$ the iterates of the ASGD can be defined as
\begin{align}
 \xbar^{\,k} := \frac{1}{k}\sum_{\ell=1}^k x^\l,
\end{align}
where $x^\l$ are the iterates of the ordinary SGD method and $k\in\{1,\dots,\kmax\}$. In each iteration we randomly draw $r^{\,k}$ and $b^{\,k}$ from $N_n(0_n,\Sigma)$, calculate $a^{\,k}=Ar^{\,k}$, the gradient of $f_k(x^{\,k})$ defined in \eqref{fl}, the iterates $x^{\,k}$ and $\xbar^{\,k}$ and the noise $\noise^{\,k}$ from \eqref{noise}. The necessary condition for a minimum of $f$ at $x^*$ is that the gradient has to vanish, i.\,e. $\nabla f(x^*)=0$. By construction the global optimal solution is $x^{\text{opt}}=0$. Below the two algorithms are compared by creating graphs of $\|\nabla f(x^{\,k})\|_2$ and $\|x^{\,k}-x^{\,\text{opt}}\|_2=\|x^{\,k}\|_2$ in dependence of the number of iterations $k$, respectively.\\
Additionally, we are interested in using our insights about the noise $\noise^{\,k}$ for this comparison. Since $E_{\text{exact}}
\equiv\E(\xi^{\,k})=0$ is valid independently of $k$, the approximation $E_{\text{empiric}}:=\tfrac{1}{k}\sum_{\ell=1}^k \noise^\l$ should tend to $E_{\text{exact}}$. This motivates plotting $\|E_{\text{exact}}-E_{\text{empiric}}\|_2$ with respect to the number of samples to estimate $E(\noise^k)$.\\
On the other hand, $\Cov(\noise^{\,k})$ depends on $k$. At the optimal solution $x^{\,\text{opt}}$ the covariance matrix of the noise is just the scale matrix $\Sigma$. We can use this to investigate, how the covariance matrices of the iterates, that can be calculated exactly using Theorem \ref{expqbq}, approach to $\Sigma$ with increasing number of iterations. Thus we plot $\|\Cov(\noise^{\,k})-\Sigma\|_2$ in dependence of the number of iterations $k$.
This way we obtain the following four graphs:
\begin{figure}[h]
	\centering	\captionsetup{justification=centering,margin=0.5cm}
    \includegraphics[width=16cm]{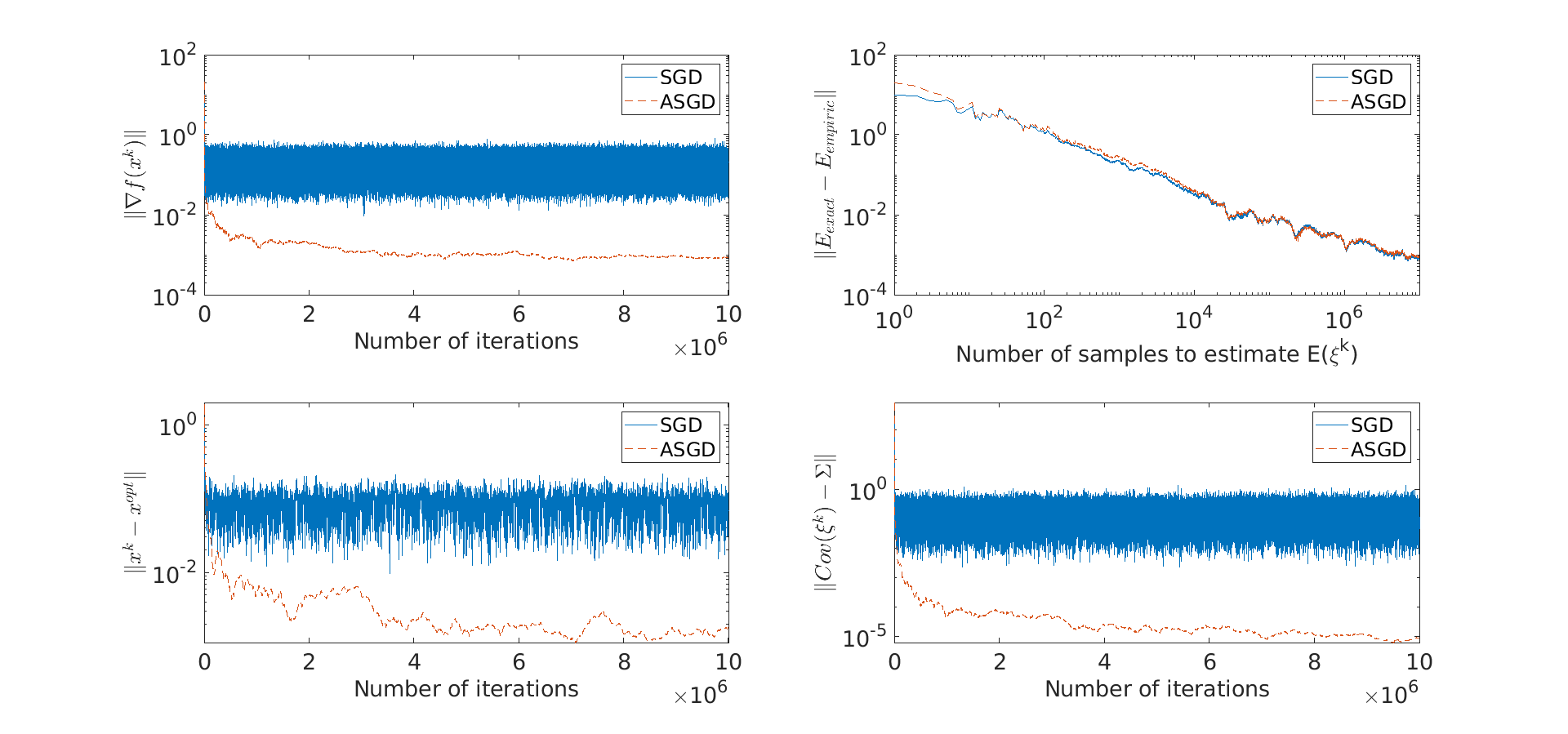}
    \caption{In all four graphs the iterates of the SGD method (blue, solid line)\\ is compared to the iterates of the ASGD method (red, dashed line).}
	\label{fig:sgdvgl}
\end{figure}\\
On the left hand side the norm of the gradient and the distance to the optimal solution are shown with respect to the number of iterations $k$. The ASGD method reaches lower values in both cases. In addition to that statistical fluctuations are much smaller.\\
At the top on the right hand side there is a plot of the distance of the empiric estimate of the expected value of the noise to the exact expected value in dependence of the number of samples. Both algorithms perform comparably well. Bottom right we have a plot of the distance of the covariance matrix of the noise in the $k^{\,\text{th}}$ iteration to the covariance matrix at the optimal solution with respect to $k$. Again, the ASGD method performs better in both counts: by reaching lower values and by fluctuating less.\\
The advantages of the ASGD are not surprising and consistent with the results of \cite{polyak}. This serves as a simple example of how two algorithms can be compared using Theorem \ref{expqbq}. 

\section{Conclusion}
In Theorem \ref{expqbq} it was proven that the expected value of the quadratic form $QBQ$ with $Q\sim W_n(\Sigma,k)$ and $B=B^\T$ can be expressed using $k$, $B$, $\Sigma$ and an eigendecomposition $\Sigma=UDU^\T$. Moreover, special cases for certain $k$, $\Sigma$ and $B$ were derived from this general formula, for instance the second momentum of a Wishart distributed random matrix $Q$, i.\,e. $\E(Q^2)$. A first example demonstrates the validity of the theorem. Beyond that the result is used to compare two stochastic methods. 

\subsection*{Acknowledgment}
I would like to express my sincere thanks to Florian Jarre, Holger Schwender and Dietrich von Rosen for their support and valuable comments.

\nocite{*}
\bibliography{literatur}
\bibliographystyle{natdin}

\end{document}